\documentclass[leqno,12pt]{article} 
\setlength{\textheight}{23cm}
\setlength{\textwidth}{16cm}
\setlength{\oddsidemargin}{0cm}
\setlength{\evensidemargin}{0cm}
\setlength{\topmargin}{0cm}
\usepackage{amsmath, amssymb}
\usepackage{amsthm} 
\usepackage{amssymb}
\usepackage{mathrsfs}
\usepackage{amssymb, url, color, graphicx, amscd, mathrsfs}
\usepackage[colorlinks=true, bookmarks=true, pdfstartview=FitH, pagebackref=true, linktocpage=true, linkcolor = magenta, citecolor = blue]{hyperref}
\usepackage{graphicx}
\usepackage{times}
%
%
\theoremstyle{plain} 
\newtheorem{theorem}{\indent\sc Theorem}[section]
\newtheorem{lemma}[theorem]{\indent\sc Lemma}
\newtheorem{corollary}[theorem]{\indent\sc Corollary}

\theoremstyle{definition} 

\newtheorem{remark}[theorem]{\indent\sc Remark}
\newtheorem{example}[theorem]{\indent\sc Example}

\numberwithin{equation}{section}
%

%

\DeclareMathOperator{\Ric}{Ric}

\makeatletter
\def\address#1#2{\begingroup
\noindent\parbox[t]{7.8cm}{%
\small{\scshape\ignorespaces#1}\par\vskip1ex
\noindent\small{\itshape E-mail address}%
\/: #2\par\vskip4ex}\hfill%
\endgroup}%
\makeatother
%
\pagestyle{myheadings}
\markright{Sharp gradient estimates} 

\title{{Sharp gradient estimates for a heat equation in Riemannian manifolds}} 
\author{
%
%
\textsc{Ha Tuan Dung, Nguyen Thac Dung} 
}
\date{} 
%

\begin{document}

\maketitle

\footnote{ 
2010 \textit{Mathematics Subject Classification}.
Primary 32M05; Secondary 32H02}
\footnote{ 
\textit{Key words and phrases}.
Ancient solution, Heat equation, Liouville theorem, Sharp gradient estimate, Sublinear growth 
}

	\begin{abstract}
		In this paper, we prove sharp gradient estimates for a positive solution to the heat
		 equation $u_t=\Delta u+au\log u$ in complete noncompact Riemannian manifolds. 
		 As its application, we show that if $u$ is a
		 positive solution of the equation $u_t=\Delta u$ and $\log u$ is of sublinear growth
		 in both spatial and time directions then $u$ must be constant.  This gradient estimate is sharp since it is well-known that 
		 $u(x,t)=e^{x+t}$ satisfying $u_t=\Delta u$. We also emphasize that our results are better than those given by Jiang (\cite{XJ16}),  Souplet-Zhang (\cite{SZ06}), Wu (\cite{Wu15, Wu17}), and others.
		
	\end{abstract}
\section{Introduction}
	In 1993, Hamilton \cite{Ham93} proved a gradient estimate for a positive solution $u$ to the heat equation 
\begin{equation}\label{elinear}
u_t=\Delta u
\end{equation}
in complete compact Riemannian manifolds with the Ricci curvature bounded from below by $-K, (K\leq0)$. If $u\leq A$ then it was verified by Hamilton that
	$$\frac{|\nabla u|}{u}\leq\left(\sqrt[]{2K}+\frac{1}{\sqrt[]{t}}\right)\sqrt[]{\log\left(\frac{A}{u}\right)}.$$
	Motivated by Hamilton's result, in 2006, Souplet and Zhang \cite{SZ06} introduced a new gradient estimate for the heat equation \eqref{elinear} in complete non-compact Riemannian manifolds. Assume that the Ricci curvature bounded from below $-K (K\geq0)$ and $u$ is a positive solution to the heat equation in $Q_{R, T}\equiv B(x_0, R)\times[t_0-T, t_0]\subset M\times\mathbb{R}$, they showed that if $u\leq A$ in $Q_{R,T}$ then there exists a dimensional constant $c(n)$ such that the following local gradient estimate 
	$$\frac{|\nabla u|}{u}\leq c(n)\left(\frac{1}{R}+\frac{1}{\sqrt[]{T}}+\sqrt[]{k}\right)\left(1+\log\frac{A}{u}\right)$$
	holds true. As an its consequence, Souplet and Zhang obtained a Liouville theorem for positive ancient solutions $u$ of the heat equation provided that $u=e^{o(d(x)+\sqrt[]{t})}$. Since $u=e^{x+t}$ satisfies $u_t=\Delta u$, their growth condition in the spatial direction is sharp. However, it is clear that their gradient estimate is not sharp in the time direction by the same example. Recently, gradient estimate of Hamilton type or Souplet-Zhang type are extendly studied in several works. For example, as pointed out by google scholar, there are more than  one hundred papers cited Souplet-Zhang's gradient estimate. For further details, we also refer the reader to \cite{DKN18, Ham93, SZ06, Wu15, Wu17} and the references there in. On the other hand, we also remark that Souplet-Zhang gave an example in \cite{SZ06} showing that Hamilton's estimate for the compact case cannot be extended directly, in a localized form, to noncompact manifolds. Moreover, as Souplet-Zhang's observation, it has been known that the right hand side term in Souplet-Zhang's gradient estimate is slightly bigger
than that in Hamilton's theorem (in the power of the log term).
	
In this paper, inspired by Hamilton's idea and Souplet-Zhang's work, we want to complete the picture about sharp gradient estimates in time direction for positive solution of the linear heat equation $u_t=\Delta u$ on complete non-compact Riemannian manifolds and local Hamilton's gradient for complete noncompact manifolds. For this purpose, our first aim is to point out a gradient estimate for a positive solution to the following equation
	\begin{equation}\label{eq:1.1}
	u_t=\Delta u+au\log u,
	\end{equation}
where $a$ is a real constant, on complete non-compact Riemannian manifolds. As showed later, due to Souplet-Zhang's example, to extend Hamilton's gradient estimate to the non-compact case, we need to insert necessary correction term in the right hand side. However, as good as Souplet-Zhang's gradient estimate, our estimate holds for noncompact manifolds and it also has a localized version as the Cheng-Yau estimate and the Hamilton's gradient estimate. The second aim of this paper is to derive a sharp gradient estimate in both spatial and time directions. Namely, we will prove that the Liouville property holds true if a positive solution $u$ of the equation $u_t=\Delta u$ is of growth $e^{o(d(x)+|t|)}$. This Liouville result confirms that our gradient estimate is sharp, exactly. Now, this is suitable time for us to introduce our main theorem.
\begin{theorem}\label{tr:1}
Let $\left( M^n,g \right)$ be an $n$-dimensional  complete Riemannian manifold with
$$\Ric\ge -\left( n-1 \right)K$$
for some constant $K\ge 0$ in $B\left(x_0, R\right)$ some fixed point $x_0$ in $M,$ and some fixed
radius $R\ge 2.$ Assume that $0<u\left(x,t\right)< A$ for some constant $A,$ is a smooth solution to the general heat equation \eqref{eq:1.1} in the cylinder ${{Q}_{R,T}}:\equiv B\left( {{x}_{0}},R \right)\times \left[ {{t}_{0}}-T,{{t}_{0}} \right]\subset M\times \left( -\infty ,\infty  \right),$ where $t_0\in R$ and $T>0,$ then there exists a constant $c\left(n\right)$ such that
\begin{align}\label{eq: 1.2}
\frac{\left| \nabla u \right|}{u}\le c\left( n \right)\left( \frac{\sqrt{\log A-\log \left( \underset{{{Q}_{R,T}}}{\mathop{\inf }}\,u \right)}}{R}+\frac{1}{\sqrt{T}}+\sqrt[4]{\frac{K}{{{R}^{2}}}}+\sqrt{H} \right)\sqrt{\log \frac{A}{u}}
\end{align}
in ${{Q}_{\frac{R}{2},\frac{T}{2}}},$ where 
$$H=\max \left\{ 2\left( n-1 \right)K+a,0 \right\}+\max \left\{ 2a\log A,0 \right\}.$$
\end{theorem}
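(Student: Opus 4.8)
The plan is to linearize \eqref{eq:1.1}, carry out a Bochner computation for a Hamilton-type quantity, and localize it with a cutoff and the parabolic maximum principle. Set $f=\log u$; since $0<u<A$, fix $\varepsilon>0$ and put
\[
g=\log\frac{A+\varepsilon}{u}=\log(A+\varepsilon)-f>0 ,\qquad 0<g\le M_\varepsilon:=\log\frac{A+\varepsilon}{\inf_{Q_{R,T}}u}\ \ \text{on}\ Q_{R,T},
\]
so that, by \eqref{eq:1.1}, $f_t=\Delta f+|\nabla f|^{2}+af$. The object to estimate is
\[
w=\frac{|\nabla f|^{2}}{g}=\frac{|\nabla u|^{2}}{u^{2}\log\frac{A+\varepsilon}{u}} ,
\]
because $\dfrac{|\nabla u|}{u}=\sqrt{wg}=\sqrt{w}\,\sqrt{\log\frac{A+\varepsilon}{u}}$; it suffices to bound $w$ on $Q_{R/2,T/2}$ and then let $\varepsilon\to0$, which turns $\log(A+\varepsilon)$ into $\log A$ and produces the $\inf_{Q_{R,T}}u$ appearing in \eqref{eq: 1.2}.

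With $\mathcal L=\Delta-\partial_t$, Bochner's formula for $|\nabla f|^{2}$ combined with the equation for $f$ and $\Ric\ge-(n-1)K$ gives, after grouping terms,
\[
\mathcal L w\ \ge\ \frac{2|\Hess f|^{2}}{g}+w^{2}+\Bigl(\frac{2}{g}-2\Bigr)\langle\nabla f,\nabla w\rangle-\max\{2(n-1)K+a,0\}\,w-\frac{\max\{a\log(A+\varepsilon),0\}}{g}\,w ,
\]
the last term arising from substituting $f=\log(A+\varepsilon)-g$ into the $af$-contribution (this is the source of the two $\max$-terms defining $H$). The favourable terms are $w^{2}$ and the Hessian term, and the latter must be kept: via the Kato-type inequality $|\Hess f|^{2}\ge\dfrac{\bigl|\nabla|\nabla f|^{2}\bigr|^{2}}{4|\nabla f|^{2}}$ and $\nabla(wg)=g\nabla w+w\nabla g$, it produces a further positive term of order $w^{2}/g$ once $\nabla w$ is controlled, and it is this term that will neutralize the problematic $\tfrac1g$-contributions below.

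Now localize: choose $\phi(x,t)=\eta\bigl(d(x,x_0)/R\bigr)\theta(t)$ with standard smooth $\eta,\theta$, $\phi\equiv1$ on $Q_{R/2,T/2}$, $\operatorname{supp}\phi\subset Q_{R,T}$, $\theta(t_0-T)=0$, $|\nabla\phi|^{2}\le C\phi/R^{2}$, $|\partial_t\phi|\le C/T$, and, by the Laplacian comparison theorem (with Calabi's trick at the cut locus), $\Delta\phi\ge-C\bigl(\tfrac1{R^{2}}+\tfrac{\sqrt{(n-1)K}}{R}\bigr)$. Apply the maximum principle to $\phi w$ on $Q_{R,T}$. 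At a point $(x_1,t_1)$ realizing a positive maximum (interior in time, by the choice of $\theta$) one has $\nabla w=-(w/\phi)\nabla\phi$ and $\mathcal L(\phi w)\le0$; expanding $\mathcal L(\phi w)$, substituting the evolution inequality and $|\nabla f|=\sqrt{wg}$, the cross term is bounded by $C R^{-1}g^{-1/2}w^{3/2}$ and the nonlinear term equals $\tfrac{\max\{a\log(A+\varepsilon),0\}}{g}w$. The key is a dichotomy at $(x_1,t_1)$: either the $w^{2}/g$-term coming from the Hessian dominates both of these, while the $w^{2}$-term absorbs the $|\nabla\phi|$-terms through Young's inequality with an $R$-dependent weight (so the bound is uniform for $R\ge2$), leaving a quadratic inequality of the shape $(\phi w)^{2}\le c(n)\bigl(H+\tfrac1{R^{2}}+\tfrac{\sqrt K}{R}+\tfrac1T\bigr)\phi w+c(n)\,\tfrac{M_\varepsilon^{2}}{R^{4}}$; or it does not, and then $w(x_1)$ — hence $\sup_{Q_{R,T}}(\phi w)$ — is already at most $c(n)\bigl(\tfrac{M_\varepsilon}{R^{2}}+\max\{a\log(A+\varepsilon),0\}\bigr)$. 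In both cases $w\le c(n)\bigl(H+\tfrac1{T}+\tfrac{\sqrt K}{R}+\tfrac{M_\varepsilon}{R^{2}}\bigr)$ on $Q_{R/2,T/2}$ (using $R\ge2$); taking square roots, recalling $\dfrac{|\nabla u|}{u}=\sqrt{g}\,\sqrt w$ with $g=\log\frac{A+\varepsilon}{u}$, and letting $\varepsilon\to0$ yields \eqref{eq: 1.2}, with the exponent $\tfrac14$ in $\sqrt[4]{K/R^{2}}$ coming from $\sqrt{\sqrt K/R}$ in the bound on $\Delta\phi$.

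I expect the genuine obstacle to be this maximum-principle step. Unlike the Souplet--Zhang gradient quantity (in which $\log(A/u)$ enters to the first power in the denominator rather than the square root), the Hamilton-type $w=|\nabla f|^{2}/g$ generates a cross term whose size is exactly what forces the correction term $\sqrt{\log A-\log\inf u}\,/R$, while the $au\log u$ nonlinearity produces the term $\sim\tfrac{a\log A}{g}w$ responsible for $\max\{2a\log A,0\}$ in $H$; arranging for the retained Hessian term to beat both of these — uniformly for $R\ge2$ and as $\varepsilon\to0$ — and keeping the constants sharp enough that the companion $e^{o(d(x)+|t|)}$ Liouville theorem follows, is the delicate core of the proof. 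The possible smallness of $g=\log(A/u)$ is precisely what the $\varepsilon$-shift and the final limiting argument are designed to handle.
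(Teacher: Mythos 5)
Your proposal follows the same route as the paper. The quantity you estimate, $w=|\nabla \log u|^{2}/\log\frac{A}{u}$, is exactly four times the paper's $w=|\nabla h|^{2}$ with $h=\sqrt{\log\frac{A}{u}}$; your evolution inequality for $w$ is equivalent, term by term, to the paper's Lemma \ref{eq:l1} after the change of variable $g=h^{2}$; and the localization by a Li--Yau cutoff plus the parabolic maximum principle at the maximum of $\phi w$ is the paper's argument. The one genuine divergence is how the singular $1/g$-factors are tamed. The paper never uses the Hessian: writing everything in $h$ makes the good quadratic term appear as $2\bigl(2+\frac{1}{h^{2}}\bigr)w^{2}$, and at the critical point one simply multiplies the whole inequality by $\frac{2h^{2}}{1+2h^{2}}\le 1$, which bounds the coefficients of the $a\log A$-term and of the cross term by absolute constants with no case analysis. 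You instead retain $\frac{2|\Hess f|^{2}}{g}$, convert it via Kato's inequality into a $\frac{w^{2}}{4g}$-term at the maximum point, and run a dichotomy; this does close (the second branch gives $w\le cH$ directly, consistent with the $\sqrt{H}$ in \eqref{eq: 1.2}), but it is more labour than the paper's normalization, and your $\varepsilon$-shift of $A$ is unnecessary since $0<u<A$ already gives $g>0$ and the paper's device handles $g\to 0$ automatically. One detail you must repair: the cutoff bounds you state, $|\nabla\phi|^{2}\le C\phi/R^{2}$ and $|\partial_{t}\phi|\le C/T$, are too weak to absorb the linear-in-$w$ error terms into $\phi w^{2}$ (each Young step then leaves an uncontrolled factor $\phi^{-1}$, e.g.\ in $w|\nabla\phi|^{2}/\phi\le \delta\phi w^{2}+C|\nabla\phi|^{4}/\phi^{3}$ and in the $M_\varepsilon^{2}/R^{4}$ term); you need the Li--Yau bounds $|\phi_{r}|\le C_{\epsilon}\phi^{\epsilon}/R$ with $\epsilon=3/4$ and $|\phi_{t}|\le C\phi^{1/2}/T$, as in the paper's Lemma \ref{eq:l2}. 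With that standard correction your argument yields \eqref{eq: 1.2}.
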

Note that if $T\geq 2t_0$, then \eqref{eq: 1.2} holds true at $\big(x_0, -\frac{T}{2}\big)$. From this observation, we obtain local a Hamilton's gradient estimate by using \eqref{eq: 1.2}. As applications of Theorem \ref{tr:1}, we have the following Liouville theorems.
\begin{corollary}\label{liouville1}
 Let $M$ be a complete, noncompact manifold with nonnegative
Ricci curvature. If $u$ is a positive ancient solution to the linear heat equation \eqref{elinear} (that is, a solution defined in all space and negative time) such that $u(x,t)=e^{o(d(x)+|t|)}$ near infinity. Then $u$ is a constant.
\end{corollary}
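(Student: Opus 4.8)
The plan is to derive Corollary~\ref{liouville1} directly from Theorem~\ref{tr:1} by a scaling argument. For the linear equation \eqref{elinear} we have $a=0$, and nonnegative Ricci curvature lets us take $K=0$; then the correction quantity $H$ in Theorem~\ref{tr:1} vanishes and \eqref{eq: 1.2} degenerates to
\begin{equation*}
\frac{|\nabla u|}{u}\le c(n)\left(\frac{\sqrt{\log A-\log\big(\inf_{Q_{R,T}}u\big)}}{R}+\frac{1}{\sqrt{T}}\right)\sqrt{\log\frac{A}{u}}\qquad\text{on }Q_{R/2,T/2}.
\end{equation*}
The whole proof consists in feeding this with well-chosen cylinders that exhaust $M\times(-\infty,0]$.

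Fix an arbitrary point $(y,s)\in M\times(-\infty,0]$; the goal is $\nabla u(y,s)=0$. For every $R\ge 2$ I would apply Theorem~\ref{tr:1} with $x_0=y$, $t_0=s$, time length $T=R$, and $A=A_R:=2\sup_{Q_{R,R}}u$ (finite and positive, since $\overline{Q_{R,R}}$ is compact and $u$ is smooth and positive). Since $(y,s)$ is the spatial centre and the temporal top of $Q_{R/2,R/2}$, the displayed estimate holds at $(y,s)$. The choice $T\asymp R$ is the crucial one: the classical balance $T\asymp R^2$ would reproduce only the Souplet--Zhang hypothesis $e^{o(d(x)+\sqrt t)}$, whereas the Hamilton-type square root $\sqrt{\log(A/u)}$ in \eqref{eq: 1.2} combined with $T\asymp R$ is exactly what produces $|t|$ in the growth condition. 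Now write $d(x)=\dist(x,p)$ for a fixed reference point $p$; the hypothesis $u=e^{o(d(x)+|t|)}$ means that for each $\varepsilon>0$ there is a constant $M_0(\varepsilon)$ with $|\log u(x,t)|\le M_0(\varepsilon)+\varepsilon\big(d(x)+|t|\big)$ for all $(x,t)$ (the region where $d(x)+|t|$ is bounded being absorbed by smoothness). Restricting to $Q_{R,R}$ gives $|\log u|\le M_1+2\varepsilon R$ with $M_1=M_1(\varepsilon,y,s)$ independent of $R$, hence $\log A_R-\log\big(\inf_{Q_{R,R}}u\big)\le 2M_1+4\varepsilon R$ and $\log\tfrac{A_R}{u(y,s)}\le M_2+2\varepsilon R$. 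Using $\sqrt{a+b}\le\sqrt a+\sqrt b$, the estimate at $(y,s)$ becomes
\begin{equation*}
\frac{|\nabla u|}{u}(y,s)\le c(n)\left(\frac{\sqrt{2M_1}}{R}+\frac{2\sqrt\varepsilon}{\sqrt R}+\frac{1}{\sqrt R}\right)\big(\sqrt{M_2}+\sqrt{2\varepsilon R}\big),
\end{equation*}
and letting $R\to\infty$ the right-hand side tends to $c(n)\sqrt{2\varepsilon}\,(1+2\sqrt\varepsilon)$. Since $\varepsilon>0$ is arbitrary, $|\nabla u(y,s)|=0$.

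As $(y,s)$ was arbitrary, $\nabla u\equiv 0$, so $u$ depends on $t$ alone; then $\Delta u\equiv0$ and \eqref{elinear} forces $u_t\equiv0$, i.e.\ $u$ is constant. I do not expect a genuine obstacle here: the argument is bookkeeping built on Theorem~\ref{tr:1}. The only delicate points are the scaling choice $T\asymp R$ and the observation that the extra term $\tfrac{1}{R}\sqrt{\log A-\log\inf_{Q_{R,R}}u}$ appearing in \eqref{eq: 1.2}, which is absent from Hamilton's original inequality, still decays under this scaling because $\log A_R-\log\inf_{Q_{R,R}}u=o(R)$.
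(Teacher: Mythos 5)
Your proof is correct and follows essentially the same route as the paper: apply Theorem~\ref{tr:1} on the cubes $Q_{R,R}$ centred at an arbitrary space-time point (the scaling $T=R$ being the key choice), observe that the growth hypothesis makes all the logarithmic terms $o(R)$, and let $R\to\infty$. The only cosmetic difference is that the paper controls $\log\big(\inf_{Q_{R,R}}u\big)$ by first replacing $u$ with $u+1\ge 1$, whereas you read the hypothesis $u=e^{o(d(x)+|t|)}$ as a two-sided bound on $\log u$; both are legitimate.
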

Note that $u(x,t)=e^{x+t}$ is a positive solution to $u_t=\Delta u$. Corollary \eqref{liouville1} implies that our gradient estimate is sharp in both spatial and time directions. Therefore, this results is better than those in \cite{DKN18, SZ06, Wu15, Wu17}. It is also worth to mention that such Liouville theorem still holds true for eternal solution of the heat equation \eqref{elinear} (that is, a solution defined in all space and positive time).

Now, we emphasize that our gradient estimate is not only better than Souplet-Zhang's gradient estimate for positive soultion $u$ of \eqref{elinear} but also better than recent gradient estimates for positive solutions of the general heat equation $u_t=\Delta u+au\log u$, where $a\neq 0$. In fact, we obtain the following Liouville type result.
\begin{corollary}\label{liouville2}
	Let $\left( M^n,g \right)$ be an $n$-dimensional  complete Riemannian manifold with
	$\Ric\ge -\left( n-1 \right)K$
	for some constant $K\ge 0.$  Suppose $u$ is a positive and bounded solution
	to the equation \eqref{eq:1.1} with $a\le 0$ and $u$ is independent of time. Assume $1\le u \le A.$ If $a\le -2\left(n-1\right) K$ then $u\equiv 1.$
\end{corollary}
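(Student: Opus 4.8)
The plan is to apply Theorem~\ref{tr:1} to the time-independent solution $u$ and then let the spatial radius $R$ and the time length $T$ tend to infinity.

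First I would observe that the hypotheses make the correction term $H$ in \eqref{eq: 1.2} vanish. Since $a\le -2(n-1)K$, we have $2(n-1)K+a\le 0$, so $\max\{2(n-1)K+a,0\}=0$; and since $a\le 0$ while $\log A\ge 0$ (because $1\le u\le A$ forces $A\ge 1$), we have $2a\log A\le 0$, so $\max\{2a\log A,0\}=0$. Hence $H=0$.

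Next, because $u$ is independent of $t$ it solves the static equation $\Delta u+au\log u=0$, hence it is a solution of \eqref{eq:1.1} on all of $M\times\RR$; thus for any $R\ge 2$ and $T>0$ we may apply Theorem~\ref{tr:1} on $Q_{R,T}=B(x_0,R)\times[t_0-T,t_0]$. Since $u\ge 1$ everywhere, $\inf_{Q_{R,T}}u\ge 1$, so $\log A-\log\big(\inf_{Q_{R,T}}u\big)\le\log A$. Evaluating \eqref{eq: 1.2} at the point $(x_0,t_0)\in Q_{R/2,T/2}$ with $H=0$ gives
$$\frac{|\nabla u|}{u}(x_0)\le c(n)\left(\frac{\sqrt{\log A}}{R}+\frac{1}{\sqrt{T}}+\sqrt[4]{\frac{K}{R^2}}\right)\sqrt{\log\frac{A}{u(x_0)}}.$$
Sending $R\to\infty$ and then $T\to\infty$, each term in the factor in parentheses goes to $0$ while $\sqrt{\log(A/u(x_0))}$ stays bounded (as $1\le u(x_0)\le A<\infty$), so $\nabla u(x_0)=0$; since $x_0$ is arbitrary, $u$ is constant on $M$.

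Finally, plugging a constant $u$ into \eqref{eq:1.1} yields $au\log u=0$, and since $u>0$ and $a<0$ this forces $\log u=0$, i.e.\ $u\equiv 1$. The argument has essentially no obstacle: the point is precisely that time-independence is what licenses $T\to\infty$ in the localized estimate, while completeness and noncompactness license $R\to\infty$, after which $H=0$ finishes it. The one subtlety is the borderline case $K=0$, $a=0$: there the estimate still gives that $u$ is constant, but only a constant $\ge 1$ (consistent with Yau's Liouville theorem for bounded harmonic functions), so the sharper conclusion $u\equiv 1$ genuinely requires $a$ to be strictly negative.
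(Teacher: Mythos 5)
Your proof is correct and follows essentially the same route as the paper: show $H=0$ from $a\le -2(n-1)K$ and $a\le 0$, $\log A\ge 0$, apply the local estimate \eqref{eq: 1.2}, let $R,T\to\infty$ to get $u$ constant, and then use the static equation to force $u\equiv 1$. Your closing remark about the borderline case $a=0$, $K=0$ is a genuine observation the paper glosses over: as stated the final step needs $a<0$, since for $a=0$ any constant in $[1,A]$ satisfies the hypotheses.
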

\begin{remark}
 Note that if $a\le 0$ and $1\le u \le A,$ using the inequality $\log x \le x$ for all $x\ge 1,$ then \eqref{eq: 1.2}  can be rewritten as following 
$$\frac{\left| \nabla u \right|}{{u}^{1/2}} \le c\left( n \right)\sqrt{A}\left( \frac{\sqrt{\log A}}{R}+\frac{1}{\sqrt{T}}+\sqrt[4]{\frac{K}{{{R}^{2}}}}+\sqrt{\max \left\{ 2\left( n-1 \right)K+a,0 \right\}} \right).$$
Therefore, our estimate is better than those in \cite{XJ16, Wu17} for the case $a\le 0.$
Moreover, the Liouville type results we obainned in the case $a\le 0$  for the equation \eqref{eq:1.1} are also better than those in \cite{DKN18, HM15, Yili, XJ16, Wu17}. 
\end{remark}
Finally, by using the same argument in the proof of Theorem \ref{tr:1} and a Laplacian comparison theorem given by Wei and Wylie \cite{WW09} (see also \cite{Bri}), it is worth to notice that our gradient estimates are also valid for a positive solution to 
$$u_t=\Delta_fu+au\log u$$
on smooth metric measure spaces $(M, g, e^{-f}dv)$, where $f$ is the weighted function (see Remark \ref{remark}). Hence, we can improve a Liouville property in Theorem 3.2 by Wu (\cite{Wu15}). Moreover, due to Example 1.2 in \cite{Wu15}, our gradient estimate is sharp. The paper has two sections. Beside this section, we use section 2 to prove Theorem \ref{tr:1} and its corollaries.
\section{Sharp gradient estimates}
	\setcounter{equation}{0}
	In this section, we will give a proof of Theorem \ref{tr:1}. 
	By now a standard routine, we need two basic lemmas. To begin with, let us introduce some notations. Consider the nonlinear heat equation
	\begin{equation}\label{eq:2}
	{{u}_{t}}=\Delta u+au\log u,
	\end{equation}
	where $a$ is a real constant, on an an $n$-dimensional  complete Riemannian manifold $\left( M^n,g \right).$ Suppose that $u\left(x,t\right)$ is a solution of \eqref{eq:2} and $0<u\le A$ for some constant $A$  in the cylinder 
	$${{Q}_{R,T}}:\equiv B\left( {{x}_{0}},R \right)\times \left[ {{t}_{0}}-T,{{t}_{0}} \right]\subset M\times \left( -\infty ,\infty  \right)$$
	where $t_0\in\mathbb{R}$ and $T>0.$ We introduce a new smooth function
	$$h=\sqrt{\log \frac{A}{u}}\ge 0$$
	in ${{Q}_{R,T}}.$ 
	 Observe that
	$$u=A{{e}^{-{{h}^{2}}}}\quad\text{and}\quad\log u=\log A-{{h}^{2}}.$$
	This implies,
	$${{u}_{t}}=-2A{{h}_{t}}h{{e}^{-{{h}^{2}}}}, \quad \nabla u=-2Ah\nabla h{{e}^{-{{h}^{2}}}},$$
	and
	\begin{align}
		\Delta u=\nabla \left( \nabla u \right)&=\nabla \left( -2Ah\nabla h{{e}^{-{{h}^{2}}}} \right)\nonumber\\
		&=-2A\left[ \nabla \left( h\nabla h \right){{e}^{-{{h}^{2}}}}+h\nabla h\nabla \left( {{e}^{-{{h}^{2}}}} \right) \right]\nonumber\\
		&=-2A\left[ {{\left| \nabla h \right|}^{2}}{{e}^{-{{h}^{2}}}}+h\Delta h{{e}^{-{{h}^{2}}}}-2{{h}^{2}}{{\left| \nabla h \right|}^{2}}{{e}^{-{{h}^{2}}}} \right]\nonumber\\
		&=-2Ah{{e}^{-{{h}^{2}}}}\left[ \Delta h+{{\left| \nabla h \right|}^{2}}\left( \frac{1}{h}-2h \right) \right].\nonumber
	\end{align}
	From the equation \eqref{eq:2}, we obtain
	\begin{align}
		-2A{{h}_{t}}h{{e}^{-{{h}^{2}}}}&=-2Ah{{e}^{-{{h}^{2}}}}\left[ \Delta h+{{\left| \nabla h \right|}^{2}}\left( \frac{1}{h}-2h \right) \right]+2Ah{{e}^{-{{h}^{2}}}}\left\langle \nabla f,\nabla h \right\rangle\nonumber\\
		&\quad\quad+aA{{e}^{-{{h}^{2}}}}\left( \log A-{{h}^{2}} \right),\nonumber
	\end{align}
	or equivalently
	\begin{align}\label{eq:2.2}
		{{h}_{t}}=\Delta h+{{\left| \nabla h \right|}^{2}}\left( \frac{1}{h}-2h \right)-\frac{a}{2}\left( \frac{\log A}{h}-h \right).
	\end{align}
	Using the above equality, we establish the first computational lemma.
	\begin{lemma}\label{eq:l1}
		Let $w={{\left| \nabla h \right|}^{2}}.$  For any $\left( x,t \right)\in {{Q}_{R,T}},$
	\begin{align}\label{eq:2.3}
\Delta w-{{w}_{t}}&\ge -\left[ \max \left\{ 2\left( n-1 \right)K+a,0 \right\}+\frac{1}{{{h}^{2}}}\max \left\{ a\log A,0 \right\} \right]w\nonumber\\
&\quad+2\left( 2h-\frac{1}{h} \right)\left\langle \nabla w,\nabla h \right\rangle +2\left( 2+\frac{1}{{{h}^{2}}} \right){{w}^{2}}.
	\end{align}	
	\end{lemma}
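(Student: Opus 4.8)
The plan is to combine the Bochner formula with the evolution identity \eqref{eq:2.2}. Since $w=|\nabla h|^2$, the Bochner formula reads
$$\Delta w=2|\Hess h|^2+2\langle\nabla h,\nabla\Delta h\rangle+2\Ric(\nabla h,\nabla h),$$
and, the metric being fixed in time, differentiating in $t$ gives $w_t=2\langle\nabla h,\nabla h_t\rangle$. Subtracting,
$$\Delta w-w_t=2|\Hess h|^2+2\langle\nabla h,\nabla(\Delta h-h_t)\rangle+2\Ric(\nabla h,\nabla h).$$

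Next I would insert $\Delta h-h_t=-w\big(\tfrac1h-2h\big)+\tfrac a2\big(\tfrac{\log A}{h}-h\big)$ from \eqref{eq:2.2} and differentiate. Using $\nabla(h^{-1})=-h^{-2}\nabla h$ one obtains
$$\nabla(\Delta h-h_t)=-\Big(\tfrac1h-2h\Big)\nabla w+\Big(\tfrac{1}{h^2}+2\Big)w\,\nabla h-\tfrac a2\Big(\tfrac{\log A}{h^2}+1\Big)\nabla h.$$
Contracting with $\nabla h$ and using $|\nabla h|^2=w$, the middle term contributes $(\tfrac{1}{h^2}+2)w^2$ while the first term yields the cross term; after multiplying by $2$ this produces exactly $2\big(2h-\tfrac1h\big)\langle\nabla w,\nabla h\rangle+2\big(2+\tfrac1{h^2}\big)w^2-a\big(\tfrac{\log A}{h^2}+1\big)w$.

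It then remains to bound the lower-order terms from below. I discard the nonnegative term $2|\Hess h|^2$; the coefficient $2(2+h^{-2})$ of $w^2$ in \eqref{eq:2.3} is already matched without it. For the curvature term I use the hypothesis $\Ric(\nabla h,\nabla h)\ge-(n-1)Kw$ and absorb the $-aw$ part of the last term into it: since $w\ge0$,
$$2\Ric(\nabla h,\nabla h)-aw\ge-\big(2(n-1)K+a\big)w\ge-\max\{2(n-1)K+a,0\}\,w.$$
Likewise $-a h^{-2}(\log A)\,w\ge-h^{-2}\max\{a\log A,0\}\,w$ because $w\ge0$ and $h^2>0$. Assembling these estimates gives \eqref{eq:2.3}.

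The one place demanding care is the chain-rule computation of $\nabla(\Delta h-h_t)$ and its contraction with $\nabla h$: one must track the sign coming from $\nabla(1/h)$ and distinguish $\langle\nabla w,\nabla h\rangle$ from $\langle\nabla h,\nabla h\rangle=w$, then check that the $w^2$ coefficient balances exactly against \eqref{eq:2.3} so that nothing from $|\Hess h|^2$ is needed. A minor point is that $1/h$ and $1/h^2$ make sense only where $u<A$; this is harmless since the estimate is used only at such points (and $A$ may be enlarged slightly if equality occurs somewhere).
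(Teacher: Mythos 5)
Your proposal is correct and follows essentially the same route as the paper: Bochner--Weitzenb\"ock for $w=|\nabla h|^2$, substitution of the evolution identity \eqref{eq:2.2}, discarding the nonnegative Hessian term, and the same elementary $\max\{\cdot,0\}$ bounds on the $-aw$ and $-a(\log A)h^{-2}w$ terms. The only (harmless) difference is organizational: you differentiate $\Delta h-h_t$ as a single expression, whereas the paper substitutes for $\Delta h$ and cancels $2\langle\nabla h_t,\nabla h\rangle$ against $w_t$ separately; your side remark about $h=0$ where $u=A$ is a fair point that the paper's strict hypothesis $u<A$ already handles.
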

	\begin{proof}
		By the Bochner-Weitzenb\"{o}ck formula, for any function $\psi $, we have
		$$\dfrac{1}{2}\Delta|\nabla \psi|^2= |\nabla^2\psi|^2+\Ric (\nabla \psi,\nabla \psi)+\left\langle\nabla\Delta \psi,\nabla \psi\right\rangle. \nonumber$$
		Therefore, under the assumption $\Ric \geq -\left(n-1\right)K,$ after
		choosing $\psi ={{\left| \nabla h \right|}^{2}}$ we deduce that
		\begin{align}
			{{\Delta }}w-{{w}_{t}}&\ge 2|{{\nabla }^{2}}h{{|}^{2}}-2\left( n-1 \right)K|\nabla h{{|}^{2}}+2\langle \nabla \Delta h,\nabla h\rangle -{{w}_{t}}\nonumber\\
			&\ge -2\left( n-1 \right)Kw+2\langle \nabla \Delta h,\nabla h\rangle -{{w}_{t}}.\nonumber
		\end{align}
		By the equality \eqref{eq:2.2}, we obtain
		\begin{align}\label{eq:2.4}
			{{\Delta }}w-{{w}_{t}}
			&\ge -2\left( n-1 \right)Kw+2\left\langle \nabla \left( {{h}_{t}}+{{\left| \nabla h \right|}^{2}}\left( 2h-\frac{1}{h} \right)+\frac{a}{2}\left( \frac{\log A}{h}-h \right) \right),\nabla h \right\rangle -{{w}_{t}}\nonumber\\
			&\ge -2\left( n-1 \right)Kw+2\left\langle \nabla \left( {{h}_{t}} \right),\nabla h \right\rangle +2\left( 2h-\frac{1}{h} \right)\left\langle \nabla \left( {{\left| \nabla h \right|}^{2}} \right),\nabla h \right\rangle \nonumber\\
			&\quad+2{{\left| \nabla h \right|}^{2}}\left\langle \nabla \left( 2h-\frac{1}{h} \right),\nabla h \right\rangle +a\left\langle \nabla \left( \frac{\log A}{h}-h \right),\nabla h \right\rangle -{{w}_{t}}.
		\end{align}
		Observe that $$2\left\langle \nabla \left( {{h}_{t}} \right),\nabla h \right\rangle ={{\left( |\nabla h{{|}^{2}} \right)}_{t}}={{w}_{t}},$$
		$$\nabla \left( 2h-\frac{1}{h} \right)=2\nabla h+\frac{\nabla h}{{{h}^{2}}}=\left( 2+\frac{1}{{{h}^{2}}} \right)\nabla h,$$
		and
		$$\nabla \left( \frac{\log A}{h}-h \right)=-\log A\frac{\nabla h}{{{h}^{2}}}-\nabla h=-\left( \frac{\log A}{{{h}^{2}}}+1 \right)\nabla h .$$
		Hence, the inequality \eqref{eq:2.4} implies
		\begin{align}
			\Delta w-{{w}_{t}}\ge -\left[ 2\left( n-1 \right)K+a \right]w-\frac{a\log A}{{{h}^{2}}}w+2\left( 2h-\frac{1}{h} \right)\left\langle \nabla w,\nabla h \right\rangle +2\left( 2+\frac{1}{{{h}^{2}}} \right){{w}^{2}}.\nonumber
		\end{align}
		Notice that 
		\begin{align}
		\frac{a\log A}{{{h}^{2}}}&\le \frac{1}{{{h}^{2}}}\max \left\{ a\log A,0 \right\},\nonumber\\
		\left[ 2\left( n-1 \right)K+a \right]&\le \max \left\{ 2\left( n-1 \right)K+a,0 \right\}.\nonumber
		\end{align}
		Therefor, we have 
		\begin{align}
			\Delta w-{{w}_{t}}&\ge -\left[ \max \left\{ 2\left( n-1 \right)K+a,0 \right\}+\frac{1}{{{h}^{2}}}\max \left\{ a\log A,0 \right\} \right]w\nonumber\\
			&\quad+2\left( 2h-\frac{1}{h} \right)\left\langle \nabla w,\nabla h \right\rangle +2\left( 2+\frac{1}{{{h}^{2}}} \right){{w}^{2}}.\nonumber
		\end{align}	
	The proof is complete.
	\end{proof}
	Next, we introduce a smooth cut-off function originated in \cite{LY86} (see also \cite{SZ06, XJ16}).
	\begin{lemma}\label{eq:l2}
	 There exists a smooth cut-off function $\psi =\psi \left( x,t \right)$  supported  in $Q_{R,T},$ satisfying following propositions
		\begin{itemize}
			\item[(i)] $\psi =\psi \left( d\left( x,{{x}_{0}} \right),t \right)\equiv \psi \left( r,t \right);\psi \left( r,t \right)=1$ in ${{Q}_{\frac{R}{2},\frac{T}{2}}},\quad 0\le \psi \le 1.$
			\item[(ii)] $\psi$ is decreasing as a radial function in the spatial variables, and $\frac{\partial \psi }{\partial r}=0$ in ${{Q}_{{R}/{2}\;,T}}.$ 
			\item[(iii)] $\left| \frac{\partial \psi }{\partial t} \right|\frac{1}{{{\psi }^{{1}/{2}\;}}}\le \frac{C}{T}.$
			\item[(iv)] $\left| \frac{\partial \psi }{\partial r} \right|\le \frac{{{C}_{\varepsilon }}{{\psi }^{\varepsilon }}}{{{R}^{2}}}\quad \text{and}\quad \left| \frac{{{\partial }^{2}}\psi }{{{\partial }^{2}}{{r}^{2}}} \right|\le \frac{{{C}_{\varepsilon }}{{\psi }^{\varepsilon }}}{{{R}^{2}}}, \quad\text{when}\quad 0<\varepsilon <1.$
		\end{itemize}
	\end{lemma}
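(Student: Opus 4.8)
The plan is to construct the standard Li--Yau--style cut-off function by pulling back a one-dimensional profile to the manifold via the distance function. First I would fix a $C^2$ function $\bar\psi\colon[0,\infty)\times[t_0-T,t_0]\to[0,1]$ that equals $1$ on $[0,R/2]\times[t_0-T/2,t_0]$, vanishes for $r\ge R$ and for $t\le t_0-T$, is nonincreasing in $r$, and is independent of $r$ on $[0,R/2]$ (so that $\partial_r\bar\psi\equiv 0$ there). A clean way is to write $\bar\psi(r,t)=\phi_1(r)\phi_2(t)$, where $\phi_1$ is a smooth nonincreasing function that is $1$ on $[0,R/2]$ and $0$ on $[R,\infty)$, and $\phi_2$ is smooth, nondecreasing, $0$ on $(-\infty,t_0-T]$ and $1$ on $[t_0-T/2,t_0]$. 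Then I would set $\psi(x,t):=\bar\psi(d(x,x_0),t)$ and check the four listed properties.

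The key steps, in order: (1) choose $\phi_1$ of the form $\phi_1(r)=\chi\!\left(\tfrac{2r-R}{R}\right)$ for a fixed smooth cut-off $\chi$ on $\mathbb R$ with $\chi\equiv1$ on $(-\infty,0]$, $\chi\equiv0$ on $[1,\infty)$, so that by the chain rule $|\phi_1'|\le c/R$ and $|\phi_1''|\le c/R^2$ on all of $[0,\infty)$; the factor $1/R^2$ in (iv) (rather than the naive $1/R$ for the first derivative) comes from the extra room one gains because one only needs $\psi^\varepsilon$ on the right-hand side — concretely, one arranges $|\phi_1'|^2\le (c_\varepsilon/R^2)\,\phi_1^{\,\varepsilon}$ and $|\phi_1''|\le (c_\varepsilon/R^2)\,\phi_1^{\,\varepsilon}$ by taking $\chi$ so that near its zero set $\chi(s)=(1-s)_+^{m}$ for large enough $m$, which makes the ratios $|\chi'|^2/\chi^\varepsilon$ and $|\chi''|/\chi^\varepsilon$ bounded for any $0<\varepsilon<1$; (2) choose $\phi_2$ so that $|\phi_2'|\le C/T$ and, again by taking $\phi_2$ to vanish to high order at $t_0-T$, so that $|\phi_2'|/\phi_2^{1/2}\le C/T$, giving (iii); (3) verify (i) and (ii) directly from the product form and the supports of $\phi_1,\phi_2$, noting $\partial_r\psi=\phi_1'(d)\phi_2(t)=0$ on $B(x_0,R/2)\times[t_0-T,t_0]$ because $\phi_1'\equiv0$ there; (4) note that since $d(\cdot,x_0)$ is Lipschitz with $|\nabla d|=1$ a.e.\ and $\phi_1$ is nonincreasing, the composition is fine even where $d$ is not smooth — at the cut locus one uses the usual Calabi trick, i.e. the inequalities hold in the barrier/distributional sense, which is all that is needed in the subsequent maximum-principle argument; alternatively one may simply note $\phi_1'\le 0$ so $\Delta\psi$ only gets a favorable contribution from the nonsmoothness.

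There is essentially no deep obstacle here; the construction is classical (Li--Yau \cite{LY86}, Souplet--Zhang \cite{SZ06}). The one point requiring a little care is the exponent bookkeeping in (iv): one must check that a single fixed choice of profile $\chi$ (with a high enough vanishing order) simultaneously yields $|\partial_r\psi|\le C_\varepsilon\psi^\varepsilon/R^2$ and $|\partial_r^2\psi|\le C_\varepsilon\psi^\varepsilon/R^2$ for every $\varepsilon\in(0,1)$, with $C_\varepsilon$ blowing up only as $\varepsilon\to1$. This is handled by the elementary observation that for $\chi(s)=(1-s)_+^m$ one has $|\chi'(s)|^2/\chi(s)^\varepsilon=m^2(1-s)_+^{2m-2-m\varepsilon}$ and $|\chi''(s)|/\chi(s)^\varepsilon=m(m-1)(1-s)_+^{m-2-m\varepsilon}$, both of which are bounded on $[0,1]$ as soon as $m\ge 2/(1-\varepsilon)$; choosing $m=2$ already works for $\varepsilon\le 1/2$, and a larger fixed $m$ covers any prescribed range of $\varepsilon$. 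The smoothing of the corners of $(1-s)_+^m$ to make $\psi$ genuinely $C^\infty$ (or at least $C^2$, which suffices) is routine and does not affect the bounds.
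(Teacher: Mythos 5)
The paper offers no proof of this lemma at all --- it simply imports the cut-off from Li--Yau and Souplet--Zhang --- so your job was to supply the standard construction, and in outline you do exactly that: a product profile $\phi_1(d(x,x_0))\phi_2(t)$ with $\phi_1$ flat on $[0,R/2]$, vanishing to high order at $r=R$, and the Calabi trick at the cut locus. That part is fine and matches what the cited references do.

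There is, however, one step that fails as written: your justification of the bound $\left|\partial_r\psi\right|\le C_\varepsilon\psi^{\varepsilon}/R^{2}$. No cut-off can satisfy this with a constant independent of $R$: since $\psi$ drops from $1$ to $0$ on $[R/2,R]$, the mean value theorem forces $\sup\left|\partial_r\psi\right|\ge 2/R$, which is incompatible with $C_\varepsilon\psi^{\varepsilon}/R^{2}\le C_\varepsilon/R^{2}$ once $R>C_\varepsilon/2$. Your ``extra room from $\psi^\varepsilon$'' argument is a sleight of hand: what your construction actually yields is $\left|\phi_1'\right|^{2}\le (c_\varepsilon/R^{2})\,\phi_1^{\varepsilon}$, i.e.\ $\left|\phi_1'\right|\le c\,\phi_1^{\varepsilon/2}/R$, which is the first power over $R$, not over $R^{2}$. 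The correct (and achievable) statement is $\left|\partial_r\psi\right|\le C_\varepsilon\psi^{\varepsilon}/R$, and this is exactly what the paper uses downstream --- e.g.\ in \eqref{eq:2.7} and \eqref{eq:2.10} the estimate $\left|\nabla\psi\right|^{4}/\psi^{3}\le c/R^{4}$ needs $\left|\nabla\psi\right|\le c\,\psi^{3/4}/R$, and in \eqref{eq:2.6} the term $\bigl(\tfrac{n-1}{r}(1+\sqrt{K}r)\,\left|\psi_r\right|/\psi^{1/2}\bigr)^{2}\le c/R^{4}+cK/R^{2}$ needs $\left|\psi_r\right|\le c\,\psi^{1/2}/R$ together with $r\ge R/2$ on the support of $\psi_r$. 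So item (iv) contains a typo ($R^{2}$ should be $R$ for the first derivative), and you should say so and prove the corrected bound rather than purport to prove the impossible one. Two smaller points: for $\chi(s)=(1-s)_+^{m}$ with $m=2$ the ratio $\left|\chi''\right|/\chi^{\varepsilon}=2(1-s)^{-2\varepsilon}$ is unbounded for every $\varepsilon>0$, so your parenthetical ``$m=2$ works for $\varepsilon\le1/2$'' is wrong; and a fixed polynomial order $m$ only covers $\varepsilon\le 1-2/m$, so to get a single $\psi$ valid for all $\varepsilon\in(0,1)$ you should use a profile vanishing to infinite order (such as $e^{-1/(1-s)}$ near $s=1$), or simply observe that the proof of Theorem \ref{tr:1} only ever uses $\varepsilon=1/2$ and $\varepsilon=3/4$.
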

	Now we are ready to prove Theorem 1.3.
	\begin{proof}[Proof of Theorem 1.1]
		  Due to the standard argument of Calabi (see \cite{cal}), we may assume that $\left(\psi w\right)$  obtains its
		  maximal value at  $\left(x_1,t_1\right)$ and we also may assume that that $x_1$ is not on the cut-locus of $M.$
		 At $\left(x_1,t_1\right),$ we have
		$$\nabla \left( \psi w \right)=0,\quad{{\nabla }}\left( \psi w \right)\le 0 \quad\text{and}\quad{{\left( \psi w \right)}_{t}}\ge 0.$$
		Hence, still being at $\left(x_1,t_1\right),$ we obtain
		$$0\ge {{\Delta }}\left( \psi w \right)-{{\left( \psi w \right)}_{t}}=\psi \left( {{\Delta }}w-{{w}_{t}} \right)+w\left( {{\Delta }}\psi -{{\psi }_{t}} \right)+2\left\langle \nabla w,\nabla \psi  \right\rangle.$$
	Using the fact that $$0=\nabla \left( \psi w \right)=w\nabla \psi +\psi \nabla w,$$ we get 
	$$0\ge \psi \left( \Delta w-{{w}_{t}} \right)+w\Delta \psi -w{{\psi }_{t}}-2\frac{{{\left| \nabla \psi  \right|}^{2}}}{\psi }w.$$
This inequality combining with \eqref{eq:2.3} implies
		\begin{align}
	0&\ge -\left[ \max \left\{ 2\left( n-1 \right)K+a,0 \right\}+\frac{1}{{{h}^{2}}}\max \left\{ a\log A,0 \right\} \right]\psi w-2\left( 2h-\frac{1}{h} \right)\left\langle \nabla h,\nabla \psi  \right\rangle w\nonumber\\
		&\quad+2\left( 2+\frac{1}{{{h}^{2}}} \right)\psi {{w}^{2}}+w{{\Delta }}\psi -w{{\psi }_{t}}-2\frac{{{\left| \nabla \psi  \right|}^{2}}}{\psi }w.\nonumber
		\end{align}
		at $\left(x_1,t_1\right).$ In other words, we have just proved that
		\begin{align}
		4\psi {{w}^{2}}&\le \left[ \frac{2{{h}^{2}}}{1+2{{h}^{2}}}\max \left\{ 2\left( n-1 \right)K+a,0 \right\}+\frac{2}{1+2{{h}^{2}}}\max \left\{ a\log A,0 \right\} \right]\psi w\nonumber\\
		&\quad-\frac{4h\left( 1-2{{h}^{2}} \right)}{1+2{{h}^{2}}}\left\langle \nabla h,\nabla \psi  \right\rangle w-\frac{2{{h}^{2}}}{1+2{{h}^{2}}}w{{\Delta }}\psi +\frac{2{{h}^{2}}}{1+2{{h}^{2}}}w{{\psi }_{t}}+\frac{4{{h}^{2}}}{1+2{{h}^{2}}}\frac{{{\left| \nabla \psi  \right|}^{2}}}{\psi }w.\nonumber
		\end{align}
		Since $0<\frac{2{{h}^{2}}}{1+2{{h}^{2}}}\le 1$ and $0<\frac{2}{1+2{{h}^{2}}}\le 2,$ we get
		\begin{align}\label{eq:2.5}
	4\psi {{w}^{2}}\le H\psi w-\frac{4h\left( 1-2{{h}^{2}} \right)}{1+2{{h}^{2}}}\left\langle \nabla h,\nabla \psi  \right\rangle w-\frac{2{{h}^{2}}}{1+2{{h}^{2}}}w\Delta \psi +w\left| {{\psi }_{t}} \right|+\frac{2{{\left| \nabla \psi  \right|}^{2}}}{\psi }w
		\end{align}
		at $\left(x_1,t_1\right).$
   Since $\Ric\ge -\left( n-1 \right)K,$ we can apply the Laplacian comparison theorem to get		
	$$\Delta r\le \frac{n-1}{r}\left( 1+\sqrt{K}r \right).$$
 Using the Laplacian comparison theorem again and Lemma \eqref{eq:l2}  we first have	
\begin{align}\label{eq:2.6}
-\frac{2{{h}^{2}}}{1+2{{h}^{2}}}w\Delta \psi &=-\frac{2{{h}^{2}}}{1+2{{h}^{2}}}w\left[ {{\psi }_{r}}\Delta r+{{\psi }_{rr}}{{\left| \nabla r \right|}^{2}} \right]\nonumber\\&\le \frac{2{{h}^{2}}}{1+2{{h}^{2}}}w\left[ \left| {{\psi }_{r}} \right|\left( \frac{n-1}{r}\left( 1+\sqrt{K}r \right) \right)+\left| {{\psi }_{rr}} \right| \right]\nonumber\\
&\le w\left[ \left| {{\psi }_{r}} \right|\left( \frac{n-1}{r}\left( 1+\sqrt{K}r \right) \right)+\left| {{\psi }_{rr}} \right| \right]\nonumber\\
&\le {{\psi }^{1/2\ }}w\frac{\left| {{\psi }_{rr}} \right|}{{{\psi }^{1/2\ }}}+\left( \frac{n-1}{r}\left( 1+\sqrt{K}r \right) \right){{\psi }^{1/2}}w\frac{\left| {{\psi }_{r}} \right|}{{{\psi }^{1/2}}}\nonumber\\
&\le \frac{3}{5}\psi {{w}^{2}}+c\left[ {{\left( \frac{\left| {{\psi }_{rr}} \right|}{{{\psi }^{1/2\ }}} \right)}^{2}}+{{\left( \frac{n-1}{r}\left( 1+\sqrt{K}r \right)\frac{\left| {{\psi }_{r}} \right|}{{{\psi }^{1/2\ }}} \right)}^{2}} \right]\nonumber\\
&\le \frac{3}{5}\psi {{w}^{2}}+\frac{c}{{{R}^{4}}}+\frac{cK}{{{R}^{2}}}.
\end{align}	
On the other hand, by the Young inequality, we have	
	\begin{align}\label{eq:2.7}
-\frac{4h\left( 1-2{{h}^{2}} \right)}{1+2{{h}^{2}}}\left\langle \nabla h,\nabla \psi  \right\rangle w
&\le 4h\frac{\left| 1-2{{h}^{2}} \right|}{1+2{{h}^{2}}}\left| \nabla \psi  \right|\left| \nabla h \right|w\nonumber\\&\le 4h\left| \nabla \psi  \right|{{w}^{3/2}}=4h\left| \nabla \psi  \right|{{\psi }^{-3/4\ }}{{\left( \psi {{w}^{2}} \right)}^{3/4\ }}\nonumber\\
&\le \frac{3}{5}\psi {{w}^{2}}+ch^4\frac{{{\left| \nabla \psi  \right|}^{4}}}{{{\psi }^{3}}}\nonumber\\&\le \frac{3}{5}\psi {{w}^{2}}+\frac{c{{D}^{2}}}{{{R}^{4}}},
	\end{align}
where $$D=\log A-\log \left( \underset{{{Q}_{R,T}}}{\mathop{\inf }}\,u \right).$$ By using the Cauchy-Schwarz inequality several times, it is not hard for us to see that the following estimates hold true: first for $\psi w$
\begin{align}\label{eq:2.8}
H\psi w\le \frac{3}{5}\psi {{w}^{2}}+c{{H}^{2}},
\end{align}
then for $w\left| {{\psi }_{t}} \right|$ as follows		
	\begin{align}\label{eq:2.9}
w\left| {{\psi }_{t}} \right|&={{\psi }^{{1}/{2}\;}}w\frac{\left| {{\psi }_{t}} \right|}{{{\psi }^{{1}/{2}\;}}}\nonumber
\\&\le \frac{3}{5}{{\left( {{\psi }^{{1}/{2}\;}}w \right)}^{2}}+c{{\left( \frac{\left| {{\psi }_{t}} \right|}{{{\psi }^{{1}/{2}\;}}} \right)}^{2}}\nonumber\\&\le \frac{3}{5}\psi {{w}^{2}}+\frac{c}{T^2}
	\end{align}
and finally for  as the following
\begin{align}\label{eq:2.10}
\frac{2{{\left| \nabla \psi  \right|}^{2}}}{\psi }w&= 2\left( {{\left| \nabla \psi  \right|}^{2}}{{\psi }^{-{3}/{2}\;}} \right)\left( {{\psi }^{{1}/{2}\;}}w \right)\nonumber\\&\le \frac{3}{5}\psi {{w}^{2}}+c\frac{{{\left| \nabla \psi  \right|}^{4}}}{{{\psi }^{3}}}\nonumber\\&\le \frac{3}{5}\psi {{w}^{2}}+\frac{c}{{{R}^{4}}}.
\end{align}		
We now substitute \eqref{eq:2.6}-\eqref{eq:2.10} into the right hand side of \eqref{eq:2.5}, and get that		
\begin{align}\label{eq:2.11}
\psi {{w}^{2}}\le c\left( \frac{{{D}^{2}}+1}{{{R}^{4}}}+\frac{K}{{{R}^{2}}}+\frac{1}{{{T}^{2}}}+{{H}^{2}} \right)
	\end{align}
at $\left(x_1,t_1\right).$ Then, for all $\left( x,t \right)\in {{Q}_{R,T}},$ using \eqref{eq:2.11} we obtain
		\begin{align}
	{{\psi }^{2}}{{w}^{2}}\left( x,t \right)&\le {{\psi }^{2}}{{w}^{2}}\left( {{x}_{1}},{{t}_{1}} \right)\nonumber\\&\le \psi {{w}^{2}}\left( {{x}_{1}},{{t}_{1}} \right)\nonumber\\
	&\le c\left( \frac{{{D}^{2}}+1}{{{R}^{4}}}+\frac{K}{{{R}^{2}}}+\frac{1}{{{T}^{2}}}+{{H}^{2}} \right).
		\end{align}
Notice that $\psi \left( r,t \right)=1$ in ${{Q}_{\frac{R}{2},\frac{T}{2}}}$ and $w={{\left| \nabla h \right|}^{2}},$ we have
\begin{align}\label{eq:1.2}
\frac{\left| \nabla u \right|}{u}\le c\left( n \right)\left( \frac{\sqrt{\log A-\log \left( \underset{{{Q}_{R,T}}}{\mathop{\inf }}\,u \right)}}{R}+\frac{1}{\sqrt{T}}+\sqrt[4]{\frac{K}{{{R}^{2}}}}+\sqrt{H} \right)\sqrt{\log \frac{A}{u}}
\end{align}
in ${{Q}_{\frac{R}{2},\frac{T}{2}}},$ where $c=c(n).$
The proof is complete. 
\end{proof}
We would like to mention that if we let $Q_{R, T}=B(x_0, R)\times[t_0, t_0+T]$ and contruct a similarly test function as in Lemma \ref{eq:l2} then we can obtain a gradient estimate for positive eternal solution to \eqref{eq: 1.2}. Now, we give a verification of Corollary \ref{liouville1}.
\begin{proof}[Proof of Corollary \ref{liouville1}]
By assumption, we have $K=H=0$. Since $u_t=\Delta u$, let $v=u+1$, then $v$ satisfies $v_t=\Delta v$. Moreover, $u, v$ has the same growth at infinity. Hence, without loss of generality, we may assume that $u\geq 1$. Fixing $(x_0, t_0)$ in space-time, Theorem \ref{tr:1} applying to the cube $Q_{R,R}=B(x_0, R)\times [t_0-R, t_0]$ implies that
$$\begin{aligned}
\frac{\left| \nabla u \right|}{u}(x_0, t_0)
&\le c\left( n \right)\left( \frac{\sqrt{\log A}}{R}+\frac{1}{\sqrt{R}}\right)\sqrt{\log \frac{A}{u(x_0, t_0)}}\\
&\leq c\left( n \right)\left( \frac{\sqrt{o(R+|R|)}}{R}+\frac{1}{\sqrt{R}}\right)\sqrt{ o(R+|R|)-\log(u(x_0, t_0))}.
\end{aligned}$$ 
Let $R$ goes to infinity, it turns out that $|\nabla u(x_0, t_0)|=0$. Since $(x_0, t_0)$ is arbitrary, we conclude that $u$ is constant.
\end{proof}
Theorem \ref{tr:1} in fact implies  Corollary \ref{liouville2}.
\begin{proof}[Proof of Corollary \ref{liouville2}]
Note that if $a\le 0$ and $1\le u \le A$ then the inequality \eqref{eq: 1.2}  can be rewritten as following 
\begin{align}\label{eq: 2.14}
\frac{\left| \nabla u \right|}{u}\le c\left( n \right)\left( \frac{\sqrt{\log A}}{R}+\frac{1}{\sqrt{T}}+\sqrt[4]{\frac{K}{{{R}^{2}}}}+\sqrt{\max \left\{ 2\left( n-1 \right)K+a,0 \right\}} \right)\sqrt{\log \frac{A}{u}}.
\end{align}
Since $a\le -2\left(n-1\right)K,$ we get 
$$\max \left\{ 2\left( n-1 \right)K+a,0 \right\}=0.$$
Letting $R\to +\infty, T \to +\infty$ in \eqref{eq: 2.14}, we obtain $u$ is  a constant. Using $\Delta u+au\log u=0,$ we get $u\equiv 1.$ \end{proof}

\begin{remark}\label{remark}
	Let $\left( M,g,{{e}^{-f}}dv \right)$ be an $n$-dimensional complete smooth metric measure space with 
$$\Ric_{f}\ge -\left( n-1 \right)K$$
	for some constant $K\ge 0$ in $B\left(x_0, R\right)$, for some fixed point $x_0$ in $M,$ and some fixed
	radius $R\ge 2.$ Assume that $0<u\left(x,t\right)< A$ for some constant $A,$ is a smooth solution to the general $f$-heat equation. 
\begin{equation}\label{weightedequation}
u_t=\Delta_fu+au\log u,
\end{equation}	
 in the cylinder ${{Q}_{R,T}}:\equiv B\left( {{x}_{0}},R \right)\times \left[ {{t}_{0}}-T,{{t}_{0}} \right]\subset M\times \left( -\infty ,\infty  \right),$ where $a\in\mathbb{R}$ is fixed, $ t_0\in \mathbb{R}$ and $T>0$. Following the proof of Theorem \ref{tr:1} and using Laplacian comparison theorem as in \cite{Bri} instead of classical Laplacian comparison theorem, we can show that there exists a constant $c\left(n\right)$ such that
	\begin{align}
\frac{\left| \nabla u \right|}{u}\le c\left( n \right)\left( \frac{\sqrt{\log A-\log \left( \underset{{{Q}_{R,T}}}{\mathop{\inf }}\,u \right)}+\sqrt{\left| \alpha  \right|R}}{R}+\frac{1}{\sqrt{T}}+\sqrt{K}+\sqrt{H} \right)\sqrt{\log \frac{A}{u}}.\label{weiver}
	\end{align}
in ${{Q}_{\frac{R}{2},\frac{T}{2}}},$ where 
$$H=\max \left\{ a,0 \right\}+\max \left\{ 2a\log A,0 \right\}.$$
Here, $\alpha :={{\max }_{\left\{ x\left| d\left( x,{{x}_{0}} \right)=1 \right. \right\}}}{{\Delta }_{f}}r\left( x \right),$ where $r(x)$ is the distance from $x$ to $x_0.$ Since the proof of this gradient estimate is similar to the proof of Theorem \ref{tr:1}, we omit the detail. 
\end{remark}
From this remark, we observe that if $\Ric_f\geq0$, let $u$ be a positive soltion to \eqref{weightedequation}, where $a\leq0$ and $u=e^{o(d(x)+|t|)}$ then $u$ must be constant, namely $u\equiv1$. The Example 1.2 in \cite{Wu15} implies that the estimate \eqref{weiver} is sharp. Indeed, let us recall the example.
\begin{example}For any $a, b>0$ let 
$$u=e^{ax+(a^2+ab)t}, \quad f=-bx$$
then $u$ satisfies the heat equation $u_t=\Delta_fu$.
\end{example}  
This example confirms that our gradient estimate is sharp in both spatial and time directions. In \cite{Wu15}, to obtain a Liouville property, the author had to require that the potential function $f$ is bounded and $u=e^{o(d(x)+\sqrt[]{|t|})}$.
	\vskip 0.3cm
	\noindent
\section*{Acknowledgment } This work was initiated when second author visited ICTP to participate in the international school on Extrinsic Geometric Flows. He would like to thank the staff there for hospitality and financial support. This work was partialy supported by NAFOSTED under grant number 101.02-2017.313.

	\bigskip
\address{{\it Ha Tuan Dung}\\
Faculty of Mathematics  \\
Hanoi Pedagogical University No. 2  \\
Xuan Hoa, Vinh Phuc, Vietnam 
}
{hatuandung.hpu2@gmail.com}
\address{ {\it Nguyen Thac Dung}\\
Faculty of Mathematics - Mechanics - Informatics \\
Hanoi University of Science (VNU) \\
Ha N\^{o}i, Vi\^{e}t Nam 
}
{dungmath@gmail.com}

\end{document}